\newcounter{minutes}
\newcounter{hours}
\newtheorem{lemma}{Lemma}
\newtheorem{theorem}{Theorem}
\newtheorem{definition}{Definition}
\newcommand{\real}{\operatorname{Re}}
\keywords{Analytic function, Coulomb wave function, Exponential starlikeness, Lemniscate of Bernoulli, Lemniscate starlikeness, starlike functions}
\subjclass[2010]{30C45,33C10}
\title[{Lemniscate and Exponential Starlikeness}]{Lemniscate and Exponential Starlikeness of Regular Coulomb Wave Functions}
\author[\.{I}. Akta\c{s}]{\.{I}brah\.{I}m Akta\c{s}}
\address{Department of Mathematics, Kam\.{ı}l \"{O}zda\u{g} Science Faculty, Karamano\u{g}lu Mehmetbey University, Yunus Emre Campus, 70100, Karaman--Turkey}
\email{aktasibrahim38@gmail.com; ibrahimaktas@kmu.edu.tr}
\begin{document}

\def\thefootnote{}
\footnotetext{ \texttt{File:~\jobname .tex,
          printed: \number\year-\number\month-\number\day,
          \thehours.\ifnum\theminutes<10{0}\fi\theminutes}
} \makeatletter\def\thefootnote{\@arabic\c@footnote}\makeatother

\maketitle

\begin{abstract}
	In this study, a normalized form of regular Coulomb wave function is considered. By using differential subordination method due to Miller and Mocanu, we determine some conditions on the parameters such that the normalized regular Coulomb wave function is lemniscate starlike and exponenetial starlike in the open unit disk, respectively.  
\end{abstract}
\section{Introduction and Preliminaries}
Let $ \mathbb{R} $, $\mathbb{C}$ and $ \mathbb{Z}^{+} $ denote the sets of real numbers, complex numbers and pozitive integers, respectively. By $\mathcal{H}[a,n]$, we denote the class of all analytic functions defined in the open unit disk $\mathbb{D}=\{z\in\mathbb{C}:\left|z\right|<1\}$ of the form $f(z)=a+a_{n}z^{n}+a_{n+1}z^{n+1}+\cdots,$ where $a\in\mathbb{C}$ and $n\in\mathbb{Z}^{+}.$ Let  $\mathcal{A}$ be the subclass of $ \mathcal{H}[0,1]$ consisting of functions $f$ normalized by the condition $f(0)=0\text{ and } f^{\prime}(0)=1$ and set $ \mathcal{H}[1,1]=\mathcal{H}_{1}$. By $\mathcal{S}$, we mean the class of functions belonging to $\mathcal{A}$, which are univalent in $\mathbb{D}$. For the analytic functions $f$ and $g$ in $\mathbb{D}$, the function $f$ is said to be subordinate to $g$, written $f\prec g$, if there exist a function $w$ analytic in $\mathbb{D}$, with $w(0)=0$ and $\left|w(z)\right|<1$, and such that $f(z)=g(w(z))$. If $g$ is univalent, then $f\prec g$ if and only if $f(0)=g(0)$ and $f(\mathbb{D})\subset g(\mathbb{D}).$

The class of starlike functions consists of all those functions $f\in\mathcal{A}$ such that the domain $f(\mathbb{D})$ is starlike with respect to origin. This function class is denoted by $\mathcal{S}^{\star}$ and has the following analytic characterization:
\begin{equation}\label{Starlike}
\mathcal{S}^{\star}=\bigg\{f\in\mathcal{A}:\real\left(\frac{zf^{\prime}(z)}{f(z)}\right)>0,z\in\mathbb{D}.\bigg\}
\end{equation}
In the literature, there is a function class
\begin{equation}\label{Ma-Minda starlike}
\mathcal{S}^{\star}(\varphi)=\bigg\{f\in\mathcal{S}:\frac{zf^{\prime}(z)}{f(z)}\prec\varphi(z)\bigg\}
\end{equation}
introduced by Ma and Minda in \cite{Ma-Minda} and is known as \textit{Ma-Minda starlike functions}. Here, the function $\varphi$ is analytic and univalent on $\mathbb{D}$ for which $\varphi(z)$ is starlike with respect to $\varphi(0)=1$ and it is symetric about the real axis with $\varphi^{\prime}(0)>0$. By the particular choosing of the function $\varphi$, many subclasses of starlike functions were defined in the literature. For example, if $\varphi(z)=(1+Az)/(1+Bz)$, where $-1\leq B<A\leq1$, the class $\mathcal{S}^{\star}[A,B]=\mathcal{S}^{\star}\left((1+Az)/(1+Bz)\right)$ is called the class of \textit{Janowski starlike functions} \cite{Janowski}. For $A=1-2\alpha$ and $B=-1$ with $\alpha\in\left[0,1\right)$, the class $\mathcal{S}^{\star}(\alpha)=\mathcal{S}^{\star}[1-2\alpha,-1]$ is known as \textit{starlike functions of order} $\alpha$ and was introduced by Robertson \cite{Robertson}. Setting $\alpha=0$, we obtain the class of starlike function given by \eqref{Starlike}. Also, by taking $\varphi=\sqrt{1+z}$, Sok\'{o}l and Stankiewicz \cite{Sokol-Stankiewicz} introduced the class $\mathcal{S}^{\star}(\sqrt{1+z})=\mathcal{SL}.$ In terms of subordination principle, the function $ f $ is called \textit{lemniscate starlike}  if $zf^{\prime}(z)/f(z)\prec\sqrt{1+z}.$ On the other hand, for $\varphi(z)=e^{z},$ Mendiratta \textit{et al.} \cite{Mendiratta} defined the class $\mathcal{S}_{e}^{\star}=\mathcal{S}^{\star}(e^{z})$ of starlike functions associated with the exponential function satisfying  the condition $\left|\log\left(\frac{zf^{\prime}(z)}{f(z)}\right)\right|<1.$ In additon to above, the authors in \cite{ACRK,Kanas,KKRC} gave some important results by using differential subordination method introduced by Miller and Mocanu \cite{Miller-Mocanu}. Comprehensive information about the differential subordination may be found in \cite{Miller-Mocanu} and \cite{Miller-Mocanu81}. Now, we would like to remind the basics of differential subordination principle.

Let $Q$ denote the set of analytic and univalent functions $q$ in $ \overline{\mathbb{D}}{\setminus}E(q) $, where $E(q)=\{\zeta\in\partial\mathbb{D}: \lim_{z\to\zeta}q(z)=\infty\}$ and such that $q^{\prime}(z)\neq0$ for $\zeta\in\partial\mathbb{D}\setminus{E(q)}.$ 
\begin{definition}\cite[p.27]{Miller-Mocanu}
	Let $\Omega$ be a set in $ \mathbb{C} $, $q\in Q$ and $n\in\mathbb{Z}^{+}.$ The class of admissible functions $\Psi_{n}[\Omega,q]$, consists of those functions $\Psi:\mathbb{C}^{3}\times\mathbb{D}\to\mathbb{C}$ that satisfy the admissibility condition:
	\begin{equation}
	\Psi(r,s,t;z)\notin\Omega,
	\end{equation}
	whenever $$ r=q(\zeta), s=m\zeta{q^{\prime}(\zeta)}, \real\left(1+\frac{t}{s}\right)\geq{m}\real\left[1+\frac{\zeta{q^{\prime\prime}(\zeta)}}{q^{\prime}(\zeta)}\right],$$where $z\in\mathbb{D}, \zeta\in\partial\mathbb{D}\setminus{E(q)}\text{ and }m\geq{n}.$ We write $\Psi_{1}[\Omega,q]=\Psi[\Omega,q].$ 
\end{definition} 
 \begin{theorem}\cite[Theorem 2.3b, p.28]{Miller-Mocanu}\label{Subordination The.}
	Let $\psi\in\Psi_{n}[\Omega,q]$ with $q(0)=a.$ If $p\in\mathcal{H}[a,n]$ satisfies
	$$\psi(p(z),zp^{\prime}(z),z^{2}p^{\prime\prime}(z))\in\Omega$$
	then $p(z)\prec{q(z)}.$
\end{theorem}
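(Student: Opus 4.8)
The plan is to argue by contradiction, producing a single point of $\mathbb{D}$ at which the admissibility condition defining $\Psi_{n}[\Omega,q]$ is violated. Since $q$ is univalent on $\mathbb{D}$ and $q(0)=a=p(0)$, the subordination $p\prec q$ is equivalent to the inclusion $p(\mathbb{D})\subseteq q(\mathbb{D})$. Suppose then that $p\not\prec q$, so that $p(\mathbb{D})$ is not contained in $q(\mathbb{D})$. Because $p(0)=a\in q(\mathbb{D})$, the image $p(\mathbb{D})$ starts inside $q(\mathbb{D})$ and must leave it; sweeping the radius $r$ from $0$ up toward $1$ and tracking the curves $p(\{|z|=r\})$, continuity furnishes a smallest radius $r_{0}\in(0,1)$ and a point $z_{0}$ with $|z_{0}|=r_{0}$ such that $p(\{|z|<r_{0}\})\subseteq q(\mathbb{D})$ while $p(z_{0})\in\partial q(\mathbb{D})$. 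As $q$ is univalent and extends analytically to the boundary off $E(q)$, there is a unique $\zeta_{0}\in\partial\mathbb{D}\setminus E(q)$ with $p(z_{0})=q(\zeta_{0})$.

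The technical core is the description of the derivatives at this first-contact point, which is exactly the content of the Miller--Mocanu boundary lemma \cite{Miller-Mocanu}. It asserts that there is a real number $m\geq n$ for which
\[
z_{0}p^{\prime}(z_{0})=m\,\zeta_{0}q^{\prime}(\zeta_{0}),\qquad \real\!\left(1+\frac{z_{0}p^{\prime\prime}(z_{0})}{p^{\prime}(z_{0})}\right)\geq m\,\real\!\left(1+\frac{\zeta_{0}q^{\prime\prime}(\zeta_{0})}{q^{\prime}(\zeta_{0})}\right).
\]
Geometrically these relations record that the curve $p(\{|z|=r_{0}\})$ is internally tangent to the boundary $q(\partial\mathbb{D})$ at the contact point: the first identity matches the tangent directions, and the one-sided inequality compares the curvatures. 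The bound $m\geq n$ comes from the membership $p\in\mathcal{H}[a,n]$, since the lowest nonconstant term of $p-a$ has degree at least $n$ and this forces the order of tangency to be at least $n$.

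It then remains to feed this point into the admissibility condition. Setting $r=p(z_{0})$, $s=z_{0}p^{\prime}(z_{0})$ and $t=z_{0}^{2}p^{\prime\prime}(z_{0})$, the boundary lemma gives precisely $r=q(\zeta_{0})$, $s=m\,\zeta_{0}q^{\prime}(\zeta_{0})$, and, since $t/s=z_{0}p^{\prime\prime}(z_{0})/p^{\prime}(z_{0})$, the inequality $\real(1+t/s)\geq m\,\real\bigl(1+\zeta_{0}q^{\prime\prime}(\zeta_{0})/q^{\prime}(\zeta_{0})\bigr)$. These are exactly the three requirements in the definition of $\Psi_{n}[\Omega,q]$, so admissibility of $\psi$ yields
\[
\psi\bigl(p(z_{0}),\,z_{0}p^{\prime}(z_{0}),\,z_{0}^{2}p^{\prime\prime}(z_{0});\,z_{0}\bigr)\notin\Omega,
\]
contradicting the hypothesis that $\psi(p(z),zp^{\prime}(z),z^{2}p^{\prime\prime}(z);z)\in\Omega$ for every $z\in\mathbb{D}$. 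Hence $p\prec q$. The principal obstacle is the boundary lemma itself: establishing the existence of the exponent $m\geq n$ together with the curvature inequality requires a careful local analysis of the tangency of $p$ and $q$ at the first-contact point, and it is this step, rather than the final verification, that carries the analytic weight of the theorem.
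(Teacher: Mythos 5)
This theorem is not proved in the paper at all: it is quoted verbatim from Miller and Mocanu's book (Theorem 2.3b, p.~28) and used as a black box, so there is no in-paper argument to compare yours against. What you have written is, in outline, the standard proof from that book: assume $p\not\prec q$, produce a first-contact point $z_{0}$ with $p(z_{0})=q(\zeta_{0})\in\partial q(\mathbb{D})$, invoke the Miller--Mocanu boundary lemma (Lemma 2.2d in their book) to obtain $m\geq n$ with $z_{0}p^{\prime}(z_{0})=m\zeta_{0}q^{\prime}(\zeta_{0})$ and the curvature inequality, and observe that the triple $\bigl(p(z_{0}),z_{0}p^{\prime}(z_{0}),z_{0}^{2}p^{\prime\prime}(z_{0})\bigr)$ then satisfies exactly the hypotheses under which admissibility forces $\psi\notin\Omega$, contradicting the assumption. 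That reduction is correct, and you are candid that the analytic weight sits entirely in the boundary lemma, which you cite rather than prove --- the same division of labor as in the original source, where Theorem 2.3b is a short consequence of Lemma 2.2d. Two small cautions: your ``sweep the radius'' construction of the first-contact point glosses over the fact that $\partial q(\mathbb{D})$ need not coincide with $q(\partial\mathbb{D}\setminus E(q))$ (boundary points can arise as limits through $E(q)$), and the uniqueness of $\zeta_{0}$ is not needed and not obvious; both issues are absorbed into the precise statement of Lemma 2.2d, so if you lean on that lemma you should let it furnish $z_{0}$, $\zeta_{0}$ and $m$ all at once rather than re-deriving the contact point informally beforehand.
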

In \cite{Miller-Mocanu}, the authors discussed the class of admissible functions $\Psi[\Omega,q]$ when the function $q$ maps $ \mathbb{D} $ onto a disk or a half-plane. Very recently, taking the subordinate function $q(z)=\sqrt{1+z}$ Madaan \textit{et al.} \cite{MKR1} discussed the admissible function class $\Psi[\Omega,\sqrt{1+z}]$ and  gave a particular case of the Theorem\eqref{Subordination The.} as follow:
\begin{lemma}\cite{MKR1}\label{Lemma-Madaan}
	Let $p\in\mathcal{H}[1,n]$ with $p(z)\not\equiv1$ and $n\geq1.$ Let $\Omega\subset\mathbb{C}$ and $\Psi:\mathbb{C}^{3}\times\mathbb{D}\to\mathbb{C}$ satisfies the admissibility condition $ \Psi(r,s,t;z)\notin\Omega $ whenever $z\in\mathbb{D},$
	for $$r=\sqrt{2\cos{2\theta}}e^{i\theta}, s=\frac{me^{3i\theta}}{2\sqrt{2\cos{2\theta}}}\text{ and }\real\left(1+\frac{t}{s}\right)\geq\frac{3m}{4}$$ where $m\geq n\geq1$ and $-\frac{\pi}{4}<\theta<\frac{\pi}{4}$. If $$\Psi\left(p(z),zp^{\prime}(z),z^{2}p^{\prime\prime}(z);z\right)\in\Omega$$ for $z\in\mathbb{D},$ then $ p(z)\prec{\sqrt{1+z}}.$
\end{lemma}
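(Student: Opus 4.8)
The plan is to recognize that Lemma \ref{Lemma-Madaan} is nothing but the specialization of Theorem \ref{Subordination The.} to the superordinate $q(z)=\sqrt{1+z}$ (principal branch), so that the whole task reduces to translating the abstract admissibility condition of Definition 1 into the explicit boundary data stated in the lemma. First I would verify that $q(z)=\sqrt{1+z}$ is legitimate, i.e. $q\in Q$ with $q(0)=1$. The map $q$ sends $\mathbb{D}$ conformally onto the interior of the right-hand loop of the Bernoulli lemniscate $\{w:|w^{2}-1|<1\}$, hence is analytic and univalent on $\mathbb{D}$; since this image is bounded, no boundary point is carried to $\infty$ and therefore $E(q)=\emptyset$. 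The only delicate boundary point is $\zeta=-1$, where $q(-1)=0$ while $q^{\prime}(\zeta)\to\infty$; as $q^{\prime}$ does not vanish there, the requirement $q^{\prime}(\zeta)\neq0$ on $\partial\mathbb{D}\setminus E(q)$ is still met.

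Next I would compute the boundary data along $\zeta=e^{i\phi}$, $\phi\in(-\pi,\pi)$. Using $1+e^{i\phi}=2\cos(\phi/2)\,e^{i\phi/2}$, one obtains $q(\zeta)=\sqrt{2\cos(\phi/2)}\,e^{i\phi/4}$. Setting $\theta=\phi/4$, which ranges over $(-\pi/4,\pi/4)$, this becomes $r=q(\zeta)=\sqrt{2\cos2\theta}\,e^{i\theta}$, matching the stated value of $r$. Differentiating gives $\zeta q^{\prime}(\zeta)=\tfrac12 e^{i\phi}(1+e^{i\phi})^{-1/2}$, and inserting the polar form yields $s=m\zeta q^{\prime}(\zeta)=\dfrac{m e^{3i\theta}}{2\sqrt{2\cos2\theta}}$, again exactly as in the lemma.

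The one remaining computation is the second-order term. From $q^{\prime}(z)=\tfrac12(1+z)^{-1/2}$ and $q^{\prime\prime}(z)=-\tfrac14(1+z)^{-3/2}$ one finds
$$1+\frac{\zeta q^{\prime\prime}(\zeta)}{q^{\prime}(\zeta)}=\frac{2+\zeta}{2(1+\zeta)}.$$
Multiplying numerator and denominator by $(1+\bar\zeta)$ to clear the fraction shows that on $|\zeta|=1$ the real part of the numerator equals $3(1+\cos\phi)$ and the denominator equals $4(1+\cos\phi)$, whence $\real\!\big(1+\zeta q^{\prime\prime}(\zeta)/q^{\prime}(\zeta)\big)=\tfrac34$ independently of $\phi$. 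Consequently $m\,\real[1+\zeta q^{\prime\prime}(\zeta)/q^{\prime}(\zeta)]=\tfrac{3m}{4}$, so the admissibility inequality $\real(1+t/s)\geq m\,\real[1+\zeta q^{\prime\prime}(\zeta)/q^{\prime}(\zeta)]$ of Definition 1 collapses to $\real(1+t/s)\geq\tfrac{3m}{4}$.

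With $r$, $s$ and the second-order bound identified, the hypotheses imposed on $\Psi$ in Lemma \ref{Lemma-Madaan} say precisely that $\Psi\in\Psi_{n}[\Omega,q]$ for $q=\sqrt{1+z}$; invoking Theorem \ref{Subordination The.} then delivers $p(z)\prec\sqrt{1+z}$, which is the assertion. I expect the only genuinely non-routine point to be the verification that $q\in Q$, and in particular the handling of the singular boundary point $\zeta=-1$; the rest is the bookkeeping of rewriting $r$, $s$ and $t$ in the $\theta$-parametrization.
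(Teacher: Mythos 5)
Your overall route is the right one, and it is the one the paper itself points to: Lemma \ref{Lemma-Madaan} is not proved in this paper at all, but quoted from \cite{MKR1}, where it arises exactly as you propose, by specializing Theorem \ref{Subordination The.} to $q(z)=\sqrt{1+z}$. Your boundary computations are also correct: with $\zeta=e^{i\phi}$ and $\theta=\phi/4$ one indeed gets $r=q(\zeta)=\sqrt{2\cos 2\theta}\,e^{i\theta}$, $s=m\zeta q^{\prime}(\zeta)=m e^{3i\theta}/\bigl(2\sqrt{2\cos 2\theta}\bigr)$, and $\real\bigl(1+\zeta q^{\prime\prime}(\zeta)/q^{\prime}(\zeta)\bigr)=3/4$ at every $\zeta\neq-1$.

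The genuine gap is your verification that $q\in Q$. Since $E(q)=\emptyset$ (your argument for this is correct: the image is bounded), membership in $Q$ requires $q$ to be \emph{analytic} on all of $\overline{\mathbb{D}}\setminus E(q)=\overline{\mathbb{D}}$; but $\zeta=-1$ is a branch point of $\sqrt{1+z}$, where $q$ fails to be analytic and $q^{\prime}$ blows up. Your defense addresses the wrong clause of the definition: the problem at $-1$ is not the requirement $q^{\prime}(\zeta)\neq0$ but the analyticity requirement, and since $-1\notin E(q)$ (the limit there is $0$, not $\infty$) the point cannot be excised. So strictly $q\notin Q$ and Theorem \ref{Subordination The.} does not apply verbatim. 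This is not a pedantic point: the admissibility hypothesis of Lemma \ref{Lemma-Madaan} is imposed only for $\theta\in(-\pi/4,\pi/4)$, i.e.\ only at boundary points $q(\zeta)$ with $\zeta\neq-1$, whereas admissibility in the sense of the class $\Psi_{n}[\Omega,q]$ would also have to hold at $\zeta=-1$, where it cannot even be formulated ($r=0$ and $\zeta q^{\prime}(\zeta)$ is infinite). What must be added is an argument that the Miller--Mocanu contact point can never lie over $\zeta_{0}=-1$: if $z_{0}\in\mathbb{D}$ were a first contact point with $p(z_{0})=q(-1)=0$, then $\tilde{p}=p^{2}-1$ satisfies $\tilde{p}(0)=0$, $|\tilde{p}|\leq1$ on $|z|\leq|z_{0}|$ and $|\tilde{p}(z_{0})|=1$, so Jack's lemma gives $z_{0}\tilde{p}^{\prime}(z_{0})=m\tilde{p}(z_{0})=-m$ for some $m\geq1$, while the left-hand side equals $2z_{0}p(z_{0})p^{\prime}(z_{0})=0$, a contradiction. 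With contact at $w=0$ excluded, the local Miller--Mocanu argument (or, alternatively, the standard limiting procedure with $q_{\rho}(z)=q(\rho z)\in Q$ and $\rho\to1^{-}$) finishes the proof; without some such step the reduction to Theorem \ref{Subordination The.} is not valid as you stated it.
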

Here, it is worth to mention that in case of first order diffrential subordination, the admissibility condition reduces to  $$\Psi\left(\sqrt{2\cos{2\theta}}e^{i\theta},\frac{me^{3i\theta}}{2\sqrt{2\cos{2\theta}}};z\right)\notin\Omega$$ where $z\in\mathbb{D}, \theta\in\left(-\frac{\pi}{4},\frac{\pi}{4}\right)$ and $m\geq n\geq1.$

In addition, Naz \textit{et al.} studied about the class of admissible function associated with the exponantial function $e^{z}$. In \cite{NNR1}, taking the subordinate function $q(z)=e^{z}$, the authors gave another special case of the Theorem\eqref{Subordination The.} as follow:
\begin{lemma}\cite{NNR1}\label{Lemma-Naz}
	Let $\Omega$ be a subset of $ \mathbb{C} $ and the function $\Psi:\mathbb{C}^{3}\times\mathbb{D}\to\mathbb{C}$ satisfies the admissibility condition $ \Psi(r,s,t;z)\notin\Omega $ whenever
	$$r=e^{e^{i\theta}}, s=me^{i\theta}r\text{ and }\real\left(1+\frac{t}{s}\right)\geq m(1+\cos\theta)$$where $z\in\mathbb{D}, \theta\in\left[0,2\pi\right)$ and $m\geq1.$ If $p$ is analytic function in $ \mathbb{D} $ with $p(0)=1$ and $$\Psi\left(p(z),zp^{\prime}(z),z^{2}p^{\prime\prime}(z);z\right)\in\Omega$$ for $z\in\mathbb{D},$ then $ p(z)\prec{e^{z}}.$
\end{lemma}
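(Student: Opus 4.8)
The plan is to recognize Lemma \ref{Lemma-Naz} as nothing more than the specialization of Theorem \ref{Subordination The.} to the single choice $q(z)=e^{z}$; once that identification is made, the proof collapses to a direct verification that the three boundary relations listed in the statement are precisely the admissibility conditions of the class $\Psi_{n}[\Omega,q]$ read off at this particular $q$.

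First I would confirm that $q(z)=e^{z}$ genuinely lies in the class $Q$. Since $e^{z_{1}}=e^{z_{2}}$ forces $z_{1}-z_{2}\in 2\pi i\mathbb{Z}$, while any two points of $\overline{\mathbb{D}}$ satisfy $|z_{1}-z_{2}|\leq 2<2\pi$, the exponential is univalent on $\overline{\mathbb{D}}$. It is bounded there, so $E(q)=\emptyset$, and $q^{\prime}(z)=e^{z}$ never vanishes; hence $q\in Q$ with $q(0)=1$. This matches the hypothesis $p(0)=1$, so one sets $a=1$ and works in $\mathcal{H}[1,n]$ with $n=1$, which turns the constraint $m\geq n\geq1$ of the Definition into the stated $m\geq1$.

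Next I would simply read off the boundary data. Writing $\zeta=e^{i\theta}$ with $\theta\in[0,2\pi)$ for $\zeta\in\partial\mathbb{D}=\partial\mathbb{D}\setminus E(q)$, and using $q^{\prime}(\zeta)=q^{\prime\prime}(\zeta)=e^{\zeta}$, the three quantities appearing in the Definition become
\begin{align*}
r&=q(\zeta)=e^{e^{i\theta}},\\
s&=m\zeta q^{\prime}(\zeta)=m e^{i\theta}e^{e^{i\theta}}=m e^{i\theta}r,\\
\real\left(1+\frac{\zeta q^{\prime\prime}(\zeta)}{q^{\prime}(\zeta)}\right)&=\real\left(1+\zeta\right)=\real\bigl(1+e^{i\theta}\bigr)=1+\cos\theta.
\end{align*}
Consequently the admissibility inequality $\real(1+t/s)\geq m\,\real[1+\zeta q^{\prime\prime}(\zeta)/q^{\prime}(\zeta)]$ is exactly $\real(1+t/s)\geq m(1+\cos\theta)$, and the first two equalities reproduce verbatim the prescribed forms of $r$ and $s$. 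Therefore the hypothesis that $\Psi(r,s,t;z)\notin\Omega$ whenever $r,s,t$ obey the listed relations says precisely that $\Psi\in\Psi_{1}[\Omega,e^{z}]$.

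Having matched the admissible classes, I would invoke Theorem \ref{Subordination The.} with $q(z)=e^{z}$ and $a=1$: since $p\in\mathcal{H}[1,1]$ satisfies $\Psi(p(z),zp^{\prime}(z),z^{2}p^{\prime\prime}(z);z)\in\Omega$, the theorem delivers $p(z)\prec e^{z}$, which is the assertion of the lemma. The only step requiring genuine (if brief) care is the membership $e^{z}\in Q$, and within it the univalence of $e^{z}$ on the \emph{closed} disk; the remainder is the routine substitution of $q=e^{z}$ into the Definition together with a citation of the governing subordination theorem.
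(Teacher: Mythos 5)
Your proof is correct and takes exactly the approach the paper intends: the paper presents this lemma (citing Naz \emph{et al.}) as the special case of Theorem~\ref{Subordination The.} obtained by taking $q(z)=e^{z}$, and your verification that $e^{z}\in Q$ (univalence on $\overline{\mathbb{D}}$ via $|z_{1}-z_{2}|\leq 2<2\pi$, $E(q)=\emptyset$, $q^{\prime}\neq 0$) together with the reduction of the admissibility data to $r=e^{e^{i\theta}}$, $s=me^{i\theta}r$, $\real\left(1+\frac{t}{s}\right)\geq m(1+\cos\theta)$ is precisely that specialization. The paper gives no independent argument beyond the citation, so there is nothing further to compare.
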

It is important to emphisaze here that the admissibility condition $ \Psi(r,s,t;z)\notin\Omega $ is verified for all $r=e^{e^{i\theta}}, s=me^{i\theta}e^{e^{i\theta}}$ and $ t $ with $ \real\left(1+\frac{t}{s}\right)\geq0 $, that is $\real\left((t+s)\right)e^{-i\theta}e^{e^{-i\theta}}\geq0$ for all $\theta\in\left[0,2\pi\right)$ and $m\geq1.$ Also, for the case  $\Psi:\mathbb{C}^{2}\times\mathbb{D}\to\mathbb{C}$, the admissibility condition reduces to $$\Psi\left(e^{e^{i\theta}},me^{i\theta}e^{e^{i\theta}};z\right)\notin\Omega$$where $z\in\mathbb{D}, \theta\in\left[0,2\pi\right)$ and $m\geq1.$

Since generalized hypergeometric functions has been used in the solution of famous Bieberbach conjecture, there has been a considerable interest on geometric properties of special functions. In the last few decades, many mathematicians started to investigate geometric properties (like univalence, starlikeness, convexity and close-to convexity) of some special functions including Bessel, Struve, Lommel, Wright and thier some generalizations. For these investigations the readers are referred to the papers \cite{aktas1,aktas2,aktas3,aktas4,aktas5,publ,BKS,samy,basz,basz2,BY,KKRC,MKR1,MKR2,Mendiratta,NNR1,NNR2,Ronning,TAH} and the references therein. Also, the authors studied some geometric properties of regular Coulomb wave function in \cite{Baricz,BCDT}, while the author investigated the zeros of regular Coulomb wave functions and their derivatives in \cite{Ikebe}. Motivated by the above works our main aim is to investigate the lemniscate and exponential starlikeness of regular Coulomb wave function by using differential subordination method.

This paper is organized as follow: The rest of this part is devoted definition of regular Coulomb wave function and its some properties. In Section\eqref{Main Results}, we deal with the lemniscate and exponential starlikeness of regular Coulomb wave functions.

The following second order differential equation:
\begin{equation}\label{Coulomb Dif. Eq.}
\frac{d^{2}w}{dz^{2}}+\left(1-\frac{2\eta}{z}-\frac{L(L+1)}{z^{2}}\right)w=0
\end{equation}
is known as Coulomb differential equation, see \cite{Abramowitz}. The equation \eqref{Coulomb Dif. Eq.} has two linearly independent solutions that are called regular and irregular Coulomb wave functions. The regular Coulomb wave function $F_{L,\eta}(z)$ is defined by (see \cite{BCDT})
\begin{equation}
F_{L,\eta}(z)=z^{L+1}e^{-iz}C_{L}(\eta){_{1}F_{1}\left(L+1-i\eta,2L+2;2iz\right)}=C_{L}(\eta)\sum_{n\geq0}a_{L,n}z^{n+L+1},
\end{equation}
where $z\in\mathbb{C}, L, \eta\in\mathbb{C}$,
$$C_{L}(\eta)=\frac{2^{L}e^{-\frac{\pi\eta}{2}}\left|\Gamma(L+1+i\eta)\right|}{\Gamma(2L+2)},$$
$$a_{L,0}=1,\hskip1cm a_{L,1}=\frac{\eta}{L+1},\hskip1cm a_{L,n}=\frac{2\eta{a_{L,n-1}-a_{L,n-2}}}{n(n+2L+1)},\hskip0.5cmn\in\{2,3,\dots\}$$ and ${_{1}F_{1}}$ denotes the Kummer confluent hypergeometric function. It is known from \cite{BCDT} and \cite{Stampach} that the regular Coulomb wave function $F_{L,\eta}(z)$ has the following Weierstrassian canonical product representation:
\begin{equation}\label{Weierstrassian canonical product}
F_{L,\eta}(z)=C_{L}(\eta)z^{L+1}e^{\frac{\eta{z}}{L+1}}\prod_{n\geq1}\left(1-\frac{z}{\rho_{L,\eta,n}}\right)e^{\frac{z}{\rho_{L,\eta,n}}},
\end{equation}
where $\rho_{L,\eta,n}$ denotes the $ n $th zero of the Colulomb wave function. 

In this study, since the regular Coulomb wave function $F_{L,\eta}(z)$ does not belong to the class $ \mathcal{A} $ we consider the following normalized form:
\begin{equation}\label{normalized form}
g_{L,\eta}(z)=C_{L}^{-1}(\eta)z^{-L}F_{L,\eta}(z).
\end{equation}

\section{Main Results}\label{Main Results}
\setcounter{equation}{0}
In this section we determine some conditions on the parameters such that  the regular Coulomb wave function is lemniscate and exponential starlike in the unit disk $ \mathbb{D} $. Our first main result is the following and it is related to the lemniscate starlikeness of normalized regular Coulomb wave function $z\mapsto{g_{L,\eta}(z)}.$
\begin{theorem}\label{Main Theorem1}
	Let $\eta, L\in\mathbb{C}.$ If $$(\sqrt{2}-1)\left|2L-1\right|+2\left|\eta\right|<\frac{\sqrt{2}}{4},$$then the normalized regular Coulomb wave function $z\mapsto{g_{L,\eta}(z)}$ is lemniscate starlike in the unit disk $ \mathbb{D}. $
\end{theorem}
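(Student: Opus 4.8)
The plan is to convert the assertion into a first-order differential subordination and then invoke Lemma~\ref{Lemma-Madaan} with $q(z)=\sqrt{1+z}$. First I would record the differential equation satisfied by the normalized function. Writing $F_{L,\eta}(z)=C_{L}(\eta)z^{L}g_{L,\eta}(z)$ and substituting into the Coulomb equation \eqref{Coulomb Dif. Eq.}, a direct computation (clearing the factor $z^{L-2}$) shows that $g:=g_{L,\eta}$ solves
\[
z^{2}g''(z)+2Lzg'(z)+\left(z^{2}-2\eta z-2L\right)g(z)=0 .
\]
Since $g(z)=z+\tfrac{\eta}{L+1}z^{2}+\cdots$, the logarithmic derivative $p(z):=zg'(z)/g(z)$ lies in $\mathcal{H}_{1}$ with $p(0)=1$. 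Using $zg'=pg$ together with the identity $z^{2}g''/g=zp'(z)-p(z)+p^{2}(z)$, dividing the displayed equation by $g$ turns it into the Riccati-type relation
\[
zp'(z)+p^{2}(z)+(2L-1)p(z)+z^{2}-2\eta z-2L=0 .
\]

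This is exactly of the form $\Psi(p(z),zp'(z);z)=0$ with $\Psi(r,s;z)=r^{2}+(2L-1)r+s+z^{2}-2\eta z-2L$. Taking $\Omega=\{0\}$, the hypothesis $\Psi(p(z),zp'(z);z)\in\Omega$ of Lemma~\ref{Lemma-Madaan} holds identically, so it remains only to verify the admissibility condition, namely that $\Psi(r,s;z)\neq 0$ for every $z\in\mathbb{D}$, $m\geq 1$ and $\theta\in(-\pi/4,\pi/4)$ when $r=\sqrt{2\cos 2\theta}\,e^{i\theta}$ and $s=\tfrac{m e^{3i\theta}}{2\sqrt{2\cos 2\theta}}$. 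Establishing this non-vanishing yields $p(z)\prec\sqrt{1+z}$, which is the asserted lemniscate starlikeness.

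For the admissibility estimate I would group $\Psi$ as
\[
\Psi(r,s;z)=\left(r^{2}-1\right)+s+(2L-1)(r-1)+\left(z^{2}-2\eta z\right),
\]
and bound below by the reverse triangle inequality,
\[
|\Psi(r,s;z)|\ \ge\ \left|(r^{2}-1)+s\right|-|2L-1|\,|r-1|-\left(|z|^{2}+2|\eta|\,|z|\right),
\]
where $|z|^{2}+2|\eta||z|<1+2|\eta|$ on $\mathbb{D}$. The two $r$-dependent quantities are explicit: since $r^{2}-1=e^{4i\theta}$ one gets $\left|(r^{2}-1)+s\right|^{2}=1+\frac{m^{2}}{8\cos 2\theta}+\frac{m\cos\theta}{\sqrt{2\cos 2\theta}}$, while $|r-1|^{2}=2\cos 2\theta-2\sqrt{2\cos 2\theta}\,\cos\theta+1$.

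The heart of the matter is to minimise $\left|(r^{2}-1)+s\right|-|2L-1|\,|r-1|$ over the admissible range. Because $\left|(r^{2}-1)+s\right|$ is increasing in $m$ and blows up as $\theta\to\pm\pi/4$ while $|r-1|\le 1$ stays bounded, the infimum is attained at $m=1$ and $\theta=0$; there $\left|(r^{2}-1)+s\right|=\sqrt{\tfrac{9}{8}+\tfrac{\sqrt2}{2}}=1+\tfrac{\sqrt2}{4}$ and $|r-1|=\sqrt2-1$, whence
\[
|\Psi(r,s;z)|>1+\frac{\sqrt2}{4}-(\sqrt2-1)|2L-1|-1-2|\eta|=\frac{\sqrt2}{4}-(\sqrt2-1)|2L-1|-2|\eta| ,
\]
which is positive precisely under the stated hypothesis. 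I expect the main obstacle to be this last optimisation: verifying rigorously that $\theta=0$ (rather than some interior $\theta$) minimises the combined expression for every admissible value of $|2L-1|$, since $|r-1|$ is itself smallest at $\theta=0$ and the subtracted term therefore competes against the minimum of $\left|(r^{2}-1)+s\right|$. Once that extremal analysis is settled, the constants $\sqrt2-1$ and $\sqrt2/4$ emerge exactly and Lemma~\ref{Lemma-Madaan} completes the proof.
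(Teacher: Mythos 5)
Your route is exactly the paper's: the same function $p=zg_{L,\eta}'/g_{L,\eta}$, the same Riccati equation $zp'+p^{2}+(2L-1)p+z^{2}-2\eta z-2L=0$, the same $\Psi$ with $\Omega=\{0\}$, the same reverse triangle inequality, and the same two quantities $|s+r^{2}-1|$ and $|r-1|$; all of your algebra (the ODE for $g_{L,\eta}$, the identity $r^{2}-1=e^{4i\theta}$, the two modulus formulas, monotonicity in $m$) is correct. The genuine gap is the one you flagged yourself at the end: you assert, but do not prove, that the combined quantity $|s+r^{2}-1|-|2L-1|\,|r-1|$ attains its infimum at $\theta=0$, $m=1$, and that assertion is in fact \emph{false} in part of the parameter range the theorem allows. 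Put $c=|2L-1|$; with $\eta=0$ the hypothesis permits $c$ up to $(2+\sqrt{2})/4\approx 0.8536$. Take $m=1$ and $\theta=0.3$: then $|s+r^{2}-1|\approx 1.3766$ and $|r-1|\approx 0.4426$, while at $\theta=0$ the values are $1+\sqrt{2}/4\approx 1.3536$ and $\sqrt{2}-1\approx 0.4142$. The difference of the combined quantities is therefore about $0.0230-0.0284\,c$, which is negative for $c>0.813$. So for $c\in(0.813,0.8536)$ the infimum is not at $\theta=0$, your displayed lower bound for $|\Psi|$ fails, and indeed the whole triangle-inequality bound goes negative (at $c=0.85$, with $2|\eta|$ near its allowed maximum and $|z|\to 1$, it is about $-0.001$). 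The competition you worried about --- the subtracted term $|r-1|$ is \emph{smallest} exactly at $\theta=0$, where the first term is also smallest --- is precisely what breaks the argument.

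For comparison, the paper closes this step by optimizing the two terms separately, claiming that $V(\theta)=|r-1|^{2}$ has a \emph{maximum} at $\theta=0$, i.e.\ $|r-1|\le\sqrt{2}-1$. That claim is erroneous: the derivative printed there is wrong (the correct one is $V'(\theta)=\sin\theta\bigl[(8\sqrt{2}\cos^{2}\theta-2\sqrt{2})/\sqrt{\cos 2\theta}-8\cos\theta\bigr]$), and $\theta=0$ is in fact a \emph{minimum} of $V$, with $V(\theta)\to 1$ as $\theta\to\pm\pi/4$ --- exactly your correct observation that $|r-1|$ is smallest at $\theta=0$ and only bounded by $1$. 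With the true bound $|r-1|<1$, the separate-optimization argument proves the theorem only under the stronger hypothesis $|2L-1|+2|\eta|<\sqrt{2}/4$. So your hesitation is well placed: neither your joint optimization as claimed nor the paper's own proof establishes the stated constant $(\sqrt{2}-1)|2L-1|$; a correct completion must either strengthen the hypothesis in this way or estimate $\Psi$ by a finer, phase-sensitive argument rather than by moduli alone. (A smaller gap you share with the paper: both take for granted that $p$ is analytic on all of $\mathbb{D}$, which requires knowing that $g_{L,\eta}$ has no zeros in $\mathbb{D}\setminus\{0\}$.)
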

\begin{proof}
	In order to prove our assertion, we shall use Lemma\eqref{Lemma-Madaan}. Now, define the function $P_{L,\eta}:\mathbb{D}\to\mathbb{C}$ by
	\begin{equation}\label{P function}
	P_{L,\eta}(z)=\frac{zg_{L,\eta}^{\prime}(z)}{g_{L,\eta}(z)}.
	\end{equation}
	It is clear from the equality \eqref{Weierstrassian canonical product} that the normalized regular Coulomb wave function $g_{L,\eta}(z)$ can be represented by the next product:
	\begin{equation}\label{product of normalized form}
	g_{L,\eta}(z)=ze^{\frac{\eta{z}}{L+1}}\prod_{n\geq1}\left(1-\frac{z}{\rho_{L,\eta,n}}\right)e^{\frac{z}{\rho_{L,\eta,n}}}.
	\end{equation}
	Taking logarithmic derivation of \eqref{product of normalized form}  and by multiplying by $z$ obtained equality we can write that
	\begin{equation*}
	P_{L,\eta}(z)=\frac{zg_{L,\eta}^{\prime}(z)}{g_{L,\eta}(z)}=1+\frac{\eta}{L+1}z+\sum_{n\geq1}\frac{z^{2}}{\rho_{L,\eta,n}(z-\rho_{L,\eta,n})}.
	\end{equation*}
	As a result, the function $P_{L,\eta}$ is analytic in $ \mathbb{D} $ and $P_{L,\eta}(0)=1$. On the other hand, since regular Coulomb wave function $F_{L,\eta}(z)$ satisfies Coulomb differential equation given by \eqref{Coulomb Dif. Eq.}, it is easily seen that the function $g_{L,\eta}(z)$ satisfies the following differential equation:
	\begin{equation}\label{Dif eq. of g}
	z^{2}g_{L,\eta}^{\prime\prime}(z)+
	2Lzg_{L,\eta}^{\prime}(z)+
	(z^{2}-2\eta{z}-2L)g_{L,\eta}(z)=0.
	\end{equation}
	Taking logarithmic derivation  of the function $P_{L,\eta}(z)$ given by \eqref{P function} yields that
	\begin{equation}\label{logarithmic derivative of P}
	\frac{zg_{L,\eta}^{\prime\prime}(z)}{g_{L,\eta}^{\prime}(z)}=\frac{zP_{L,\eta}^{\prime}(z)-P_{L,\eta}^{2}(z)-P_{L,\eta}(z)}{P_{L,\eta}(z)}.
	\end{equation}
	Now, if we consider equations \eqref{P function} and \eqref{logarithmic derivative of P} in \eqref{Dif eq. of g}, then the function $P_{L,\eta}(z)$ satisfies the next differential equation:
	\begin{equation}\label{Dif. eq. of P}
	zP_{L,\eta}^{\prime}(z)+P_{L,\eta}^{2}(z)+(2L-1)P_{L,\eta}(z)+z^{2}-2\eta{z}-2L=0.
	\end{equation}
	Define the function $\Psi:\mathbb{C}^{2}\times\mathbb{D}\to\mathbb{D},$
	\begin{equation}\label{Psi function}
	\Psi(r,s;z)=s+r^{2}+(2L-1)r+z^{2}-2\eta{z}-2L
	\end{equation}
	and set $\Omega=\{0\}.$ It is seen from \eqref{Psi function} that 
	$$\Psi(P_{L,\eta}(z),zP_{L,\eta}^{\prime}(z);z)=0\in\Omega$$
	for $\forall z\in\mathbb{D}.$ In wiev of Lemma\eqref{Lemma-Madaan}, we have to show that $\Psi(r,s;z)\notin\Omega$ for $$r=\sqrt{2\cos{2\theta}}e^{i\theta}, s=\frac{me^{3i\theta}}{2\sqrt{2\cos{2\theta}}},-\frac{\pi}{4}<\theta<\frac{\pi}{4}, m\geq n\geq1\text{ and }\forall z\in\mathbb{D}.$$
	By using reverse triangle inequality in \eqref{Psi function}, we can write
	\begin{equation}\label{Ineq. of Psi}
	\left|\Psi(r,s;z)\right|\geq\left|s+r^{2}-1\right|-\left|2L-1\right|\left|r-1\right|-\left|z\right|^{2}-2\left|\eta\right|\left|z\right|.
	\end{equation}
	We would like to find minimum of $\left|s+r^{2}-1\right|$ and maximum of $\left|r-1\right|$. For this purpose consider
	\begin{equation*}
	s+r^{2}-1=\frac{me^{3i\theta}}{2\sqrt{2\cos{2\theta}}}+2\cos{2\theta}e^{2i\theta}-1=e^{3i\theta}\left(\frac{m}{2\sqrt{2\cos{2\theta}}}+e^{i\theta}\right).
	\end{equation*}
	Therefore, we obtain
	\begin{equation*}
	\left|s+r^{2}-1\right|^{2}=\frac{m^{2}}{8\cos{2\theta}}+\frac{m\cos\theta}{\sqrt{2\cos{2\theta}}}+1.
	\end{equation*}
	Define the function $U:\left(-\frac{\pi}{4},\frac{\pi}{4}\right)\to\mathbb{R}$, $$U(\theta)=\frac{m^{2}}{8\cos{2\theta}}+\frac{m\cos\theta}{\sqrt{2\cos{2\theta}}}+1.$$
	Since
	\begin{equation*}
	U^{\prime}(\theta)=\frac{m^{2}\sin{2\theta}}{4\cos^{2}{2\theta}}+\frac{m\sin\theta}{2\sqrt{2}\cos{2\theta}\sqrt{\cos{2\theta}}}
	\end{equation*}
	the function $U(\theta)$ has a critical point at the point $\theta=0$ in $\left(-\frac{\pi}{4},\frac{\pi}{4}\right).$ Also, from the second derivative test there is a minimum in $\theta=0$ since 
	\begin{equation*}
	U^{\prime\prime}(0)=\frac{2m^{2}+\sqrt{2}m}{4}>0.
	\end{equation*}
	Namely,
	\begin{equation*}
	\min_{\theta\in(-\frac{\pi}{4},\frac{\pi}{4})}U(\theta)=U(0)=\frac{(m+2\sqrt{2})^{2}}{8}.
	\end{equation*}
	As a result, we obtain
	\begin{equation*}
	\left|s+r^{2}-1\right|^{2}\geq\frac{(m+2\sqrt{2})^{2}}{8}
	\end{equation*}
	or
	\begin{equation}\label{Minimum1}
	\left|s+r^{2}-1\right|\geq\frac{1+2\sqrt{2}}{2\sqrt{2}}
	\end{equation}
	for $m\geq1.$ On the other hand, we can write that
	\begin{equation*}
	\left|r-1\right|^{2}=\left|\sqrt{2\cos{2\theta}}e^{i\theta}-1\right|^{2}=2\cos{2\theta}-2\sqrt{2}\cos{\theta}\sqrt{\cos{2\theta}}+1.
	\end{equation*}
	If we define the function $V:\left(-\frac{\pi}{4},\frac{\pi}{4}\right)\to\mathbb{R}$,
	\begin{equation*}
	V(\theta)=2\cos{2\theta}-2\sqrt{2}\cos{\theta}\sqrt{\cos{2\theta}}+1,
	\end{equation*}
	then we see that
	\begin{equation*}
	V^{\prime}(\theta)=\sin{\theta}\left(\frac{2\sqrt{2}}{\sqrt{\cos{2\theta}}}-8\cos{\theta}\right).
	\end{equation*}
	So, the critical point of the function $V(\theta)$ is $\theta=0$ in $(-\frac{\pi}{4},\frac{\pi}{4})$. Also, there is a maximum at  the point  $\theta=0$ since $V^{\prime\prime}(0)=-8+2\sqrt{2}<0.$ That is,
	\begin{equation*}
	\max_{\theta\in(-\frac{\pi}{4},\frac{\pi}{4})}V(\theta)=V(0)=(\sqrt{2}-1)^{2}.
	\end{equation*}
	Therefore, we have that
	\begin{equation*}
	\left|r-1\right|^{2}\leq(\sqrt{2}-1)^{2}
	\end{equation*}
	or
	\begin{equation}\label{Maximum1}
	\left|r-1\right|\leq(\sqrt{2}-1).
	\end{equation}
	Finally, if we consider the inequalities \eqref{Minimum1} and \eqref{Maximum1} in the inequality\eqref{Ineq. of Psi}, then we get
	\begin{equation*}
	\left|\Psi(r,s;z)\right|\geq\frac{\sqrt{2}}{4}-(\sqrt{2}-1)\left|2L-1\right|-2\left|\eta\right|.
	\end{equation*}
	This shows that $\left|\Psi(r,s;z)\right|>0$ under the hypothesis. So, $\Psi(r,s;z)\neq0$  and by virtue of the Lemma\eqref{Lemma-Madaan} the function $z\mapsto{g_{L,\eta}(z)}$ is lemniscate starlike in the unit disk $ \mathbb{D}$.
\end{proof}

 The following is the second main result and it is related to the exponential starlikeness of normalized regular Coulomb wave function $z\mapsto{g_{L,\eta}(z)}.$

\begin{theorem}\label{Main Theorem2}
	Let $\eta, L\in\mathbb{C}.$ If $$(e-1)\left|2L-1\right|+2\left|\eta\right|<\frac{e-1}{e^{2}},$$then the normalized regular Coulomb wave function $z\mapsto{g_{L,\eta}(z)}$ is exponential starlike in the unit disk $ \mathbb{D}. $
\end{theorem}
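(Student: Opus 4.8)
The plan is to mirror the proof of Theorem\eqref{Main Theorem1} line for line, simply replacing Lemma\eqref{Lemma-Madaan} by Lemma\eqref{Lemma-Naz}. Thus I would again set $P_{L,\eta}(z)=zg_{L,\eta}^{\prime}(z)/g_{L,\eta}(z)$, observe from the product representation \eqref{product of normalized form} that $P_{L,\eta}$ is analytic in $\mathbb{D}$ with $P_{L,\eta}(0)=1$, and recall that $P_{L,\eta}$ satisfies the first order differential equation \eqref{Dif. eq. of P}. Taking the same admissible function $\Psi(r,s;z)=s+r^{2}+(2L-1)r+z^{2}-2\eta z-2L$ from \eqref{Psi function} and $\Omega=\{0\}$, one has $\Psi(P_{L,\eta}(z),zP_{L,\eta}^{\prime}(z);z)=0\in\Omega$, so by Lemma\eqref{Lemma-Naz} it suffices to verify that $\Psi(r,s;z)\notin\Omega$ for the exponential boundary data $r=e^{e^{i\theta}}$, $s=me^{i\theta}e^{e^{i\theta}}$, $\theta\in[0,2\pi)$, $m\geq1$ and $z\in\mathbb{D}$.

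The estimate would then proceed exactly as before, starting from the algebraic regrouping
\begin{equation*}
\Psi(r,s;z)=\left(s+r^{2}-1\right)+(2L-1)(r-1)+z^{2}-2\eta z,
\end{equation*}
which by the reverse triangle inequality together with $|z|<1$ gives
\begin{equation*}
\left|\Psi(r,s;z)\right|>\left|s+r^{2}-1\right|-\left|2L-1\right|\left|r-1\right|-1-2\left|\eta\right|.
\end{equation*}
So the whole problem reduces to minimizing $\left|s+r^{2}-1\right|$ and maximizing $\left|r-1\right|$ over the admissible data. For the second quantity I would write $r=e^{\cos\theta}e^{i\sin\theta}$ to obtain
\begin{equation*}
\left|r-1\right|^{2}=e^{2\cos\theta}-2e^{\cos\theta}\cos(\sin\theta)+1,
\end{equation*}
and then, by the usual critical-point and second-derivative argument, show that this is maximized at $\theta=0$, giving $\left|r-1\right|\leq e-1$. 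For the first quantity, writing $s+r^{2}-1=me^{i\theta}e^{e^{i\theta}}+e^{2e^{i\theta}}-1$, I expect the minimum to be attained at $\theta=\pi$ with $m=1$, where the expression is real and equals $-\left(1+e^{-1}-e^{-2}\right)$, so that $\left|s+r^{2}-1\right|\geq 1+(e-1)/e^{2}$. Combining the two bounds yields
\begin{equation*}
\left|\Psi(r,s;z)\right|>\frac{e-1}{e^{2}}-(e-1)\left|2L-1\right|-2\left|\eta\right|,
\end{equation*}
which is strictly positive precisely under the stated hypothesis, whence $\Psi(r,s;z)\neq0$ and Lemma\eqref{Lemma-Naz} completes the argument.

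I expect the genuine obstacle to be the minimization of $\left|s+r^{2}-1\right|$. In the lemniscate case the corresponding expression factored cleanly as $e^{3i\theta}\bigl(m/(2\sqrt{2\cos2\theta})+e^{i\theta}\bigr)$ and reduced to a one-variable extremum; here, by contrast, the squared modulus is a transcendental function of $\theta$ involving $e^{\cos\theta}$, $\cos(\sin\theta)$ and $\cos(2\sin\theta)$, and it must be optimized jointly in $\theta$ and in the free parameter $m\geq1$. The delicate points are, first, to check that for every admissible $\theta$ the quadratic $m\mapsto\left|s+r^{2}-1\right|^{2}$ has its unconstrained minimizer at some $m^{\ast}\leq 1$, so that the minimum over $m\geq1$ sits at the endpoint $m=1$; and second, to verify that the resulting one-variable function of $\theta$ (which is symmetric about $\theta=\pi$) attains its global minimum at $\theta=\pi$ rather than at some competing interior critical point. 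The maximization of $\left|r-1\right|$ is of the same transcendental type but is easier, since $\theta=0$ is geometrically the point of the unit circle whose image under $e^{e^{i\theta}}$ is farthest from $1$.
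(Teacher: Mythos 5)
Your proposal reproduces the paper's proof essentially verbatim: the same function $P_{L,\eta}$, the same admissible function $\Psi$ from \eqref{Psi function} with $\Omega=\{0\}$, Lemma\eqref{Lemma-Naz}, the same extremal points ($\theta=0$ giving $\left|r-1\right|\leq e-1$, and $\theta=\pi$, $m=1$ giving $\left|s+r^{2}-1\right|\geq 1+(e-1)/e^{2}$), and the same final inequality. In fact your bound $1+(e-1)/e^{2}=1+\tfrac{1}{e}-\tfrac{1}{e^{2}}$ is the corrected form of the paper's \eqref{Minimum2}, which contains a sign typo (it states $\tfrac{1}{e}-\tfrac{1}{e^{2}}-1$) but is used with the correct value in the final estimate; and the joint minimization over $(\theta,m)$ that you flag as the delicate point is precisely the step the paper itself only asserts via a ``second derivative test'' without details.
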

\begin{proof}
	In order to prove the exponential starlikeness of the normalized regular Coulomb wave function $z\mapsto{g_{L,\eta}(z)}$ we shall use the Lemma\eqref{Lemma-Naz} due to Naz \textit{et al.} It is known from the Theorem\eqref{Main Theorem1} that the function $P_{L,\eta}(z)$ is analytic in $\mathbb{D}$ and $P_{L,\eta}(0)=1.$ Now, consider again the function $\Psi(r,s;z)$ given by \eqref{Psi function} and suppose that $\Omega=\{0\}.$ It is clear that $$\Psi(P_{L,\eta}(z),zP_{L,\eta}^{\prime}(z);z)=0\in\Omega$$
	for $\forall z\in\mathbb{D}.$ By virtue of Lemma\eqref{Lemma-Naz}, we need to show that $\Psi(r,s;z)\notin\Omega$ whenever $$r=e^{e^{i\theta}}, s=me^{i\theta}e^{e^{i\theta}}, \real\left((t+s)\right)e^{-i\theta}e^{e^{-i\theta}}\geq0, \theta\in\left[0,2\pi\right),  z\in\mathbb{D} \text{ and }m\geq1.$$ Here, we want to calculate the minimum value of $\left|s+r^{2}-1\right|$ and the maximum value of $\left|r-1\right|$ under our assumption. If we consider
	\begin{align*}
	s+r^{2}-1&=me^{i\theta}e^{e^{i\theta}}+e^{e^{2i\theta}}-1\\&=\left[me^{\cos\theta}\cos(\theta+\sin\theta)+e^{2\cos\theta}\cos(2\sin\theta)-1\right]\\&+i\left[me^{\cos\theta}\sin(\theta+\sin\theta)+e^{2\cos\theta}\sin(2\sin\theta)\right],
	\end{align*}
	then we have
	\begin{align*}
	\left|s+r^{2}-1\right|^{2}&=\left[me^{\cos\theta}\cos(\theta+\sin\theta)+e^{2\cos\theta}\cos(2\sin\theta)-1\right]^{2}\\&+\left[me^{\cos\theta}\sin(\theta+\sin\theta)+e^{2\cos\theta}\sin(2\sin\theta)\right]^{2}.
	\end{align*}
	By using second derivative test, it can be shown that the function $ A:\left[0,2\pi\right)\to\mathbb{R} $,
	 \begin{align*}
	 A(\theta)&=\left[me^{\cos\theta}\cos(\theta+\sin\theta)+e^{2\cos\theta}\cos(2\sin\theta)-1\right]^{2}\\&+\left[me^{\cos\theta}\sin(\theta+\sin\theta)+e^{2\cos\theta}\sin(2\sin\theta)\right]^{2}
	 \end{align*} has a minimum at the point $\theta=\pi$ in $\left[0,2\pi\right).$ That is,
	 \begin{equation*}
	 \min_{\theta\in\left[0,2\pi\right)}A(\theta)=A(\pi)=\left(\frac{1}{e^{2}}-\frac{m}{e}-1\right)^{2}
	 \end{equation*}
	and so,
	\begin{equation}\label{Minimum2}
		\left|s+r^{2}-1\right|\geq\frac{1}{e}-\frac{1}{e^{2}}-1
	\end{equation}
	for $m\geq1.$ On the other hand, we have that 
	\begin{equation*}
	\left|r-1\right|^{2}=\left|e^{e^{i\theta}}-1\right|^{2}=e^{2\cos\theta}-2e^{\cos\theta}\cos(\sin\theta)+1.
	\end{equation*}
	Define the function $B:\left[0,2\pi\right)\to\mathbb{R}$,
	$$B(\theta)=e^{2\cos\theta}-2e^{\cos\theta}\cos(\sin\theta)+1.$$ 
    It can be easily shown that the function $B(\theta)$ attains its maximum at the point $\theta=0$ in $\left[0,2\pi\right).$ Namely,
    \begin{equation*}
    \max_{\theta\in\left[0,2\pi\right)}B(\theta)=B(0)=e^{2}-2e+1=(e-1)^{2}.
    \end{equation*}
    As a consequence, we get
    \begin{equation}\label{Maximum2}
    \left|r-1\right|\leq{e-1}.
    \end{equation}
    Taking into acount the inequalities \eqref{Minimum2} and \eqref{Maximum2} in the inequality \eqref{Ineq. of Psi} we can write that
    \begin{equation*}
    \left|\Psi(r,s;z)\right|\geq\frac{e-1}{e^{2}}-(e-1)\left|2L-1\right|-2\left|\eta\right|>0.
    \end{equation*}
    This means that $\Psi(e^{e^{i\theta}},me^{i\theta}e^{e^{i\theta}};z)\notin\Omega.$ In view of Lemma\eqref{Lemma-Naz} the normalized regular Coulomb wave function $z\mapsto{g_{L,\eta}(z)}$ is exponential starlike in the unit disk $ \mathbb{D}. $ 
\end{proof}


\begin{thebibliography}{99}
	
	\bibitem{Abramowitz}
	\textsc{M. Abramowitz, I.A. Stegun},
	\textit{Handbook of Mathematical Functions with Formulas, Graphs, and Mathematical Tables}, Dover Publications, NewYork, 1972.
	
	\bibitem{aktas1}
	\textsc{\.{I}. Akta\c{s}, \'A. Baricz}, Bounds for radii of starlikeness of some $q$-Bessel functions, {\em Results Math.} 72(1) (2017) 947--963.
	
	
	\bibitem{aktas2}
	\textsc{\.{I}. Akta\c{s}, \'A. Baricz, H. Orhan}, Bounds for the radii of starlikeness and convexity of some special functions, {\em Turkish J. Math.} 42(1) (2018) 211--226.
	
	\bibitem{aktas3}
	\textsc{\.{I}. Akta\c{s}, \'A. Baricz and S. Singh}, Geometric and monotonic properties of hyper-Bessel functions, {\em Ramanujan J}, (2019), https://doi.org/10.1007/s11139-018-0105-9.
	
	\bibitem{aktas4}
	\textsc{\.{I}. Akta\c{s}, \'A. Baricz, N. Ya\u{g}mur}, Bounds for the radii of univalence of some special functions, {\em Math. Inequal. Appl.} 20(3) (2017) 825--843.
	
	
	\bibitem{aktas5}
	\textsc{\.{I}. Akta\c{s}, H. Orhan}, Bounds for the radii of convexity of some $q$-Bessel functions, Bulletin of the Korean Mathematical Society, (in press).
	
	\bibitem{ACRK}
	\textsc{R.M. Ali, N.E. Cho, V. Ravichandran, S.S. Kumar}, \textit{Differential subordination for
	functions associated with the lemniscate of Bernoulli}, Taiwanese J. Math., \textbf{16}(3) (2012),
	1017--1026.
	
	\bibitem{publ} \textsc{\'A. Baricz}, Geometric properties of generalized Bessel functions, {\em Publ. Math. Debrecen} 73 (2008) 155--178.
	
	\bibitem{Baricz}
	\textsc{\'{A}. Baricz}, \textit{Tur\'{a}n type inequalities for regular Coulomb wave functions}, J. Math. Anal. Appl. 430 (2015) 166–180.
	
	\bibitem{BCDT}
	\textsc{\'{A}. Baricz, M. \c{C}a\u{g}lar, E. Deniz, E. Toklu}, \textit{Radii of starlikeness and convexity of regular Coulomb wave functions}, arXiv:1605.06763
	
	\bibitem{BKS} \textsc{\'A. Baricz, P.A. Kup\'an, R. Sz\'asz}, The radius of starlikeness of normalized Bessel functions of the first kind, {\em Proc. Amer. Math. Soc.} 142(6) (2014) 2019--2025. 
	
	
	
	\bibitem{samy} \textsc{\'A. Baricz, S. Ponnusamy}, Starlikeness and convexity of generalized Bessel functions, {\em Integr. Transforms Spec. Funct.} 21 (2010) 641--653.
	
	\bibitem{basz} \textsc{\'A. Baricz, R. Sz\'asz}, The radius of convexity of normalized Bessel functions of the first kind, {\em Anal. Appl.} 12(5) (2014) 485--509.
	
	\bibitem{basz2} \textsc{\'A. Baricz, R. Sz\'asz}, Close-to-convexity of some special functions, {\em Bull. Malay. Math. Sci. Soc.} 39(1) (2016) 427--437.
	
	\bibitem{BY} \textsc{\'{A}. Baricz, N. Ya\u{g}mur}, Geometric properties of some Lommel and Struve functions, {\em Ramanujan J.} (in press) doi:10.1007/s11139-015-9724-6.
	
	\bibitem{Ikebe}
	\textsc{Y. Ikebe}
	\textit{The Zeros of Regular Coulomb Wave Functions and of Their Derivative}, Mathematics of Computation, \textbf{29}(131) (1975), 878--887.
	
	\bibitem{Janowski}
	\textsc{W. Janowski}
	\textit{Extremal problems for a family of functions with positive real part and for some related families}.
	Annales Polonici Mathematici, \textbf{23} (1970), 159-177.
	
	\bibitem{Kanas}
	\textsc{S. Kanas}, \textit{Differential subordination related to conic sections}, J. Math. Anal. Appl. \textbf{317}(2)
	(2006), 650--658.
	
	\bibitem{KKRC}
	\textsc{S.S. Kumar, V. Kumar, V. Ravichandran, N.E. Cho}, \textit{Sufficient conditions for starlike functions associated with the lemniscate of Bernoulli}, J. Inequal. Appl. 2013, 2013:176, 13
	pp.
	
	\bibitem{Ma-Minda}
	\textsc{W. Ma, C.D. Minda}
	\textit{A unified treatment of some special classes of univalent functions}, Proceedings of the International Conference on Complex Analysis at the Nankai Institute of Mathematics, (1992) 157--169.
	
	
	\bibitem{MKR1}
	\textsc{V. Madaan, A. Kumar, V. Ravichandran}, \textit{Starlikeness Associated with Lemniscate of Bernoulli}, Filomat, (in press).
	

	\bibitem{MKR2}
	\textsc{V. Madaan, A. Kumar, V. Ravichandran}, \textit{Lemniscate convexity and other properties of generalized Bessel functions}, arXiv:1902.04277v1
	
	\bibitem{Mendiratta}
	\textsc{R. Mendiratta, S. Nagpal, V. Ravichandran},
	\textit{On a subclass of strongly starlike functions associated with exponantiel function},
	Bulletin of Malaysian Mathematical Sciences Society, \textbf{38}(1) (2015), 365--386.
	
	
	\bibitem{Miller-Mocanu}
	\textsc{S.S. Miller, P.T. Mocanu}, \textit{Differential Subordinations. Theory and Applications}, Marcel Dekker, Inc., New York-Basel, 2000.
	 
	\bibitem{Miller-Mocanu81}
	\textsc{S.S. Miller, P.T. Mocanu}, \textit{Differential subordinations and univalent functions}, The Michigan Mathematical Journal,\textbf{28}(2) (1981), 157--172.
	
	\bibitem{NNR1}
	\textsc{A. Naz, S. Nagpal, V. Ravichandran}, \textit{Starlikeness associated with the exponential function,} Turkish Journal of Mathematics, Volume 43 (2019), 1353--1371.
	
	
	\bibitem{NNR2}
	\textsc{A. Naz, S. Nagpal, V. Ravichandran}, \textit{ Exponential starlikeness and convexity of confluent hypergeometric, Lommel and Struve functions}, arXiv:1908.072v1
	
	\bibitem{Robertson}
	\textsc{M. S. Robertson}, \textit{Certain classes of starlike functions}, Michigan Math. J. \textbf{32}(2) (1985),
	135--140.
	
	\bibitem{Ronning}
	\textsc{F. R\o{}nning}
\textit{Uniformly convex functions and a corresponding class of star-like functions }. Proceedings of the American	Mathematical Society, 118 (1) (1993), 189--196.
	
	\bibitem{Sokol-Stankiewicz}
	\textsc{J. Sok\'{o}\l{}, J. Stankiewicz}, \textit{Radius of convexity of some subclasses of strongly starlike functions}, Zeszyty Nauk. Politech Rzeszowskiej Mat.,19 (1996), 101--105.
	
	
	
	\bibitem{Stampach}
	\textsc{F. \v{S}tampach, P. \v{S}\v{T}ov\'{\i}\v{c}ek},
	\textit{Orthogonal polynomials associated with Coulomb wave functions}, J. Math. Anal. Appl.
	419(1) (2014) 231–25
	
	\bibitem{TAH}
	\textsc{E. Toklu, \.{I}. Akta\c{s}, H. Orhan}
	\textit{Radii problems for normalized $q$-Bessel and Wright functions}, Acta Univ. Sapientiae, Mathematica, \textbf{11}(1) (2019), 203--223.
	\end{thebibliography}
\end{document}